\numberwithin{equation}{section}
\newtheorem{thm}{Theorem}[section]
\newtheorem{lem}[thm]{Lemma}
\newtheorem{defn}[thm]{Definition}
\theoremstyle{definition}
\newtheorem{rmk}[thm]{Remark}
\newcounter{alphabet}
\newcommand{\bysame}{\leavevmode\hbox to3em{\hrulefill}\,}
\title[Estimates for the nonlinear viscoelastic damped wave equation on compact Lie  groups]
{Estimates for the nonlinear viscoelastic damped wave equation on compact Lie  groups}
\author{Arun Kumar Bhardwaj} 
\address{Arun Kumar Bhardwaj,  \endgraf Department of Mathematics
	\endgraf Indian Institute of Technology  Guwahati
	\endgraf Guwahati, Assam, India.} 
\email{arunkrbhardaj@gmail.com}
\author{Vishvesh Kumar} 
\address{Vishvesh Kumar, Ph. D.  \endgraf Department of Mathematics: Analysis, Logic and Discrete Mathematics
	\endgraf Ghent University
	\endgraf Krijgslaan 281, Building S8,	B 9000 Ghent,
	Belgium .} 
\email{vishveshmishra@gmail.com}
\author{Shyam Swarup Mondal} 
\address{Shyam Swarup Mondal  \endgraf Department of Mathematics
	\endgraf Indian Institute of Technology Delhi
	\endgraf Delhi, 110 016, India.} 
\email{mondalshyam055@gmail.com}
\keywords{ Nonlinear wave equation, Viscoelastic damping,   $L^2-L^2$-estimate, Local well-posedness, Compact Lie groups, Gro} \subjclass[2010]{Primary 35L15,  35L05; Secondary  35L05}
\date{\today}
\begin{document}
	\allowdisplaybreaks

	\begin{abstract} 
		Let $G$ be a compact Lie group.  In this article, we investigate the   Cauchy problem for a nonlinear  wave equation with the viscoelastic damping on   $G$. More preciously,  we investigate  some  $L^2$-estimates   for the solution to the homogeneous nonlinear viscoelastic damped wave equation on   $G$  utilizing    the group Fourier transform on $G$.   We also prove  that    there is no  improvement  of any decay rate for the norm $\|u(t,\cdot)\|_{L^2(G)}$  by further assuming the $L^1(G)$-regularity of initial data.  Finally,  using the noncommutative Fourier analysis on compact Lie groups,  we prove a local in time existence result in the energy space $\mathcal{C}^1([0,T],H^1_{\mathcal L}(G)).$ 
	\end{abstract}

	\maketitle
	\section{Introduction}
	Let $G$ be a compact Lie group and let $\mathcal{L}$ be the Laplace-Beltrami operator on $G$ (which also coincides with the Casimir element of the enveloping algebra of the Lie algebra of $G$). In this paper  
	we derive decay estimates for the solution to   
	the Cauchy problem for a nonlinear wave equation with two types of damping terms, namely, 
	\begin{align} \label{eq0010}
		\begin{cases}
			\partial^2_tu-\mathcal{L}u+\partial_tu-\mathcal{L}\partial_tu=f(u), & x\in G,t>0,\\
			u(0,x)=\varepsilon u_0(x),  & x\in G,\\ \partial_tu(x,0)=\varepsilon u_1(x), & x\in G,
		\end{cases}
	\end{align}
	where  $\varepsilon$ is a positive constant describing the smallness of Cauchy data. Here,  for the moment, we assume that  $u_{0}$ and $ u_{1}$ are  taken from the energy space $ H_{\mathcal{L}}^1(G)$  and  concerning the nonlinearity  of $f(u)$, we shall deal only with  the typical case such as $f(u):=|u|^{p}, p>1$ without loosing the  essence of the problem. The equation \eqref{eq0010} is known as the viscoelastic dumped wave equation associted with the Laplace-Beltrami operators on compact Lie groups.

	The linear viscoelastic damped wave equation in the   setting of the Euclidean space has been well studied in the literature.    Several prominent  researchers have devoted considerable attention to the following Cauchy problem for   linear   damped wave equation
	\begin{align}\label{eq007}
		\begin{cases}
			\partial_{t}^{2} u-\Delta u+\partial_{t} u=0, & x \in \mathbb{R}^{n}, t>0, \\
			u(0, x)=u_{0}(x), \quad \partial_{t} u(0, x)=u_{1}(x), & x \in \mathbb{R}^{n},
		\end{cases}
	\end{align}	
	due to its  application of this model in the theory of viscoelasticity and some fluid dynamic. In his seminal work, 	Matsumura \cite{34} first  established basic decay estimates for the solution to the linear equation (\ref{eq007}) and afterthat,    many researchers    have concentrated on investigated  a typical important nonlinear problem, namely,   the following  semilinear damped wave equation
	\begin{align}\label{eq0088}
		\begin{cases}
			\partial_{t}^{2} u-\Delta u+\partial_{t} u=|u|^p, & x \in \mathbb{R}^{n}, t>0, \\
			u(0, x)=u_{0}(x), \quad \partial_{t} u(0, x)=u_{1}(x), & x \in \mathbb{R}^{n}.
		\end{cases}
	\end{align}	
	In this case,  there exists a real number $p_{F} \in(1, \infty)$ such that if $p>p_{F}$, then for some range of $p$ the corresponding Cauchy problem (\ref{eq0088}) has a small global in time solution $u(t, x)$ for the small initial data  $u_{0}$ and $ u_{1}$. On the other hand,     when $p \in\left(1, p_{F}\right]$,  under some    condition on the initial data ($\int_{\mathbb{R}^{n}} u_{i}(x) d x>0, i=0,1$), the corresponding problem (\ref{eq0088})   
	does not have any nontrivial global solutions. In general,  such  a number $p_{F}$ is called as the Fujita critical exponent. For  a      detailed study related to Fujita   exponent, we  refer to \cite{2,5,4,7,8} and references therein.

	Further,  the study of the  semilinear damped wave equation (\ref{eq0088}) is further generalized  by the  following  strongly damped wave equation
	\begin{align}\label{eq00888}
		\begin{cases}
			\partial_{t}^{2} u-\Delta u+\Delta \partial_{t} u=\mu f(u) & x \in \mathbb{R}^{n}, t>0, \\
			u(0, x)=u_{0}(x), \quad \partial_{t} u(0, x)=u_{1}(x), & x \in \mathbb{R}^{n},
		\end{cases}
	\end{align}	
	by several researchers recently. When $\mu=0,$ in the case, for the dissipative structures of the Cauchy problem  (\ref{eq00888}),   Ponce \cite{15} and  Shibata  \cite{9}  derived some $L^{p}\left(\mathbb{R}^{n}\right)-L^{q}\left(\mathbb{R}^{n}\right)$ decay  estimates for the  solution to  (\ref{eq00888}) with $\mu=0.$  In the last  decade, some $L^{2}\left(\mathbb{R}^{n}\right)-L^{2}\left(\mathbb{R}^{n}\right)$ estimates with additional $L^{1}\left(\mathbb{R}^{n}\right)$-regularity were also derived  by several authors in \cite{10,11,13,14}.  In the same period, the authors of \cite{10} proved global (in time) existence of small data solution to the corresponding semilinear Cauchy problem to (\ref{eq00888}) with power nonlinearity on the right-hand side.  Recently, Ikehata-Todorova-Yordanov \cite{16} and Ikehata  \cite{17} have caught an asymptotic profile of solutions to problem  (\ref{eq00888})  which is well-studied in the field of the Navier-Stokes equation case.

	The study of the semilinear wave equation has also been extended in the non-Euclidean framework. Several papers are devoted for studying linear PDE in non-Euclidean structures in the last decades.   For example, the semilinear wave equation with or without damping has been investigated for the   Heisenberg group \cite{24,30}.    In the case of graded groups, we refer to the recent works \cite{gra1, gra2, gra3}.  	Concerning the damped wave equation on compact Lie groups, we refer to \cite{27, 28,31,garetto} (see also \cite{author} for the fractional wave equation). Here, we would also like to highlight that       estimates for the linear viscoelastic damped wave equation on the Heisenberg groupin was studied in  \cite{32}.

	Recently, Ikehata-Sawada \cite{19} and    Ikehata-Takeda \cite{20} considered and   
	studied the following Cauchy problem  which has two types of damping terms
	\begin{align}\label{eq008}
		\begin{cases}
			\partial_{t}^{2} u-\Delta u+\partial_{t} u-\Delta \partial_{t} u=0, & x \in \mathbb{R}^{n}, t>0, \\
			u(0, x)=u_{0}(x), \quad \partial_{t} u(0, x)=u_{1}(x), & x \in \mathbb{R}^{n}.
		\end{cases}
	\end{align}	
	Such type of related problem  with slight variants  extensively investigated in by authors \cite{14,21,22}.   
	
	An interesting and viable problem is to consider such types of (i.e., Cauchy problem \ref{eq1})   viscoelastic damped wave equations in the setting of non-Euclidean spaces, in particular, compact Lie groups.  So far to the best of our knowledge, in the framework of compact Lie  group, the viscoelastic damped wave equation  have not been studied yet. Our main aim of this  this article is to study the Cauchy problem   	
	for nonlinear wave equation with two types of damping terms on the compact Lie group  $G$, namely, 
	\begin{align*}  
		\begin{cases}
			\partial^2_tu-\mathcal{L}u+\partial_tu-\mathcal{L}\partial_tu=f(u), & x\in G,t>0,\\
			u(0,x)=\varepsilon u_0(x),  & x\in G,\\ \partial_tu(x,0)=\varepsilon u_1(x), & x\in G.
		\end{cases}
	\end{align*}

	\subsection{Main results}
	Throughout the paper  we denote $L^{q}(G)$, the space of $q$-integrable functions on $G$ with respect to the normalized Haar measure for $1 \leq  q<\infty$ (respectively, essentially bounded for $q=\infty$) and for $s>0$ and $q \in(1, \infty)$ the Sobolev space $H_{\mathcal{L} }^{ s, q}(G)$ is defined as the space
	\begin{align}\label{sob}
		H_{\mathcal{L}}^{s, q}(G) \doteq\left\{f \in L^{q}(G):(-\mathcal{L})^{s / 2} f \in L^{q}(G)\right\}
	\end{align}
	endowed with the norm $\|f\|_{H_{\mathcal{L}}^{s, q}(G)} \doteq\|f\|_{L^{q}(G)}+\left\|(-\mathcal{L})^{s / 2} f\right\|_{L^{q}(G)}$.  We simply denote $H_{\mathcal{L}}^{s}(G)$ as  the Hilbert space $H_{\mathcal{L}}^{s, 2}(G)$.   
	
	By employing the tools from the Fourier analysis for compact Lie groups, our first result below is concerned with the existence of the global solution to the homogeneous Cauchy problem (\ref{eq0010}) (i.e., when $f=0$) satisfying the suitable
	decay properties. More precisely, our goal is to derive  $L^2(G)$-decay estimates    for the Cauchy data as it is stated in the following    theorem.
	\begin{thm}\label{thm1}
		Let $u_0,~u_1\in H^1_{\mathcal L}(G)$ and let $u\in\mathcal{C}^1([0,\infty),H^1_{\mathcal L}(G))$ be the solution to the homogeneous Cauchy problem
		\begin{align}\label{eq1}
			\begin{cases}
				\partial^2_tu-\mathcal{L}u+\partial_tu-\mathcal{L}\partial_tu=0, & x\in G,~t>0,\\
				u(0,x)=u_0(x),  & x\in G\\ \partial_tu(x,0)=u_1(x), & x\in G.
			\end{cases}
		\end{align}
		Then, $u$ satisfies the following $L^2$-estimates
		\begin{align}\label{eq2}
			\| u(t,\cdot)\|_{L^2( G)} &\leq C\left(\| u_0\|_{L^2( G)}+\| u_1\|_{L^2( G)}\right),\\ \label{eq3}
			\|(-\mathcal L)^{1/2} u(t,\cdot)\|_{L^2( G)}&\leq C(1+t)^{-\frac{1}{2}}\left(\| u_0\|_{H^1_{\mathcal{L}}( G)}+\| u_1\|_{L^2( G)}\right),\\ \label{eq4}
			\|\partial_tu(t,\cdot)\|_{L^2( G)}&\leq C(1+t)^{-1}\left(\| u_0\|_{H^1_{\mathcal{L}}( G)}+\| u_1\|_{L^2( G)}\right),\\ \label{eq5}
			\|\partial_t(-\mathcal L)^{1/2} u(t,\cdot)\|_{L^2( G)}&\leq C(1+t)^{-\frac{3}{2}}\left(\| u_0\|_{H^1_{\mathcal{L}}( G)}+\| u_1\|_{H^1_{\mathcal{L}}( G)}\right),
		\end{align}
		for any $t\geq 0$, where $C$ is a positive multiplicative constant.
	\end{thm}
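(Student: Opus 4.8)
The plan is to diagonalize \eqref{eq1} by means of the group Fourier transform on $G$, thereby reducing the PDE to a decoupled family of scalar second-order ODEs indexed by the unitary dual $\widehat{G}$. Recall that the Casimir/Laplace--Beltrami operator is central, so on the Fourier block attached to $\xi\in\widehat{G}$ it acts as multiplication by a scalar; writing $\lambda_\xi^2$ for the corresponding eigenvalue of $-\mathcal L$, the symbol of $-\mathcal L$ is $\lambda_\xi^2 I_{d_\xi}$. Taking the Fourier transform of \eqref{eq1} and denoting by $\widehat u(t,\xi)$ the matrix-valued Fourier coefficient, each entry satisfies
\[
\partial_t^2\widehat u(t,\xi)+(1+\lambda_\xi^2)\,\partial_t\widehat u(t,\xi)+\lambda_\xi^2\,\widehat u(t,\xi)=0,
\qquad \widehat u(0,\xi)=\widehat{u_0}(\xi),\quad \partial_t\widehat u(0,\xi)=\widehat{u_1}(\xi).
\]
The decisive structural observation is that the characteristic polynomial factors as $(\mu+1)(\mu+\lambda_\xi^2)$; its discriminant is the perfect square $(1-\lambda_\xi^2)^2$, so the two roots are exactly $-1$ and $-\lambda_\xi^2$, always real and nonpositive.

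Next I would solve this ODE in closed form, distinguishing three regimes according to $\lambda_\xi^2$. For $\lambda_\xi^2\notin\{0,1\}$ the roots are distinct and
\[
\widehat u(t,\xi)=\frac{\lambda_\xi^2\,\widehat{u_0}(\xi)+\widehat{u_1}(\xi)}{\lambda_\xi^2-1}\,e^{-t}
-\frac{\widehat{u_0}(\xi)+\widehat{u_1}(\xi)}{\lambda_\xi^2-1}\,e^{-\lambda_\xi^2 t};
\]
for the trivial representation $\lambda_\xi^2=0$ one gets $\widehat u(t,\xi)=\widehat{u_0}(\xi)+\widehat{u_1}(\xi)(1-e^{-t})$, and for the double root $\lambda_\xi^2=1$ a solution of the form $(A+Bt)e^{-t}$. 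Differentiating these and inserting the factor $\lambda_\xi$, I obtain companion closed forms for $\lambda_\xi\widehat u$, $\partial_t\widehat u$ and $\lambda_\xi\partial_t\widehat u$, which are the quantities whose $L^2(G)$-norms appear in \eqref{eq2}--\eqref{eq5}.

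The analytic core is then a set of pointwise-in-$\xi$ multiplier bounds, summed via the Plancherel identity $\|f\|_{L^2(G)}^2=\sum_{\xi\in\widehat G}d_\xi\|\widehat f(\xi)\|_{\mathrm{HS}}^2$. The polynomial decay rates come from the elementary inequalities $\sup_{\lambda\ge0}\lambda^k e^{-\lambda^2 t}\le C_k\,t^{-k/2}$ for $k=1,2,3$, applied to the slowly decaying factor $e^{-\lambda_\xi^2 t}$ and combined with boundedness for small $t$ to yield the $(1+t)^{-1/2}$, $(1+t)^{-1}$ and $(1+t)^{-3/2}$ rates of \eqref{eq3}--\eqref{eq5}. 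Concretely I would split $\widehat G$ into a low-frequency part $\lambda_\xi^2\le 1/2$, a bounded middle part $1/2\le\lambda_\xi^2\le 2$, and a high-frequency part $\lambda_\xi^2\ge 2$. On the low part the coefficients $\tfrac{1}{1-\lambda_\xi^2}$ are harmless and the $e^{-\lambda_\xi^2 t}$ terms, via the above suprema, carry the polynomial decay. On the high part the coefficients are essentially constant, the surviving exponential factors ($e^{-t}$ and $e^{-\lambda_\xi^2 t}\le e^{-2t}$) beat any polynomial, and the extra powers of $\lambda_\xi$ produced there are transferred onto the initial data, which is exactly what dictates the $H^1_{\mathcal L}$-regularity appearing on the right-hand sides. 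The middle part contains only finitely many representations, since the spectrum of $-\mathcal L$ is discrete with finite multiplicities, so it contributes an exponentially small remainder and simultaneously disposes of the double-root case $\lambda_\xi^2=1$. The zero mode yields the non-decaying estimate \eqref{eq2} and is annihilated (or forced to decay) by the factors $\lambda_\xi$ or $\partial_t$ in \eqref{eq3}--\eqref{eq5}.

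The step I expect to be most delicate is securing uniformity of the pointwise constants across all of $\widehat G$, and in particular the clean treatment of the apparent singularity at $\lambda_\xi^2=1$: one must either invoke the double-root formula together with the finiteness of the middle zone, or use a divided-difference representation of $\frac{e^{-\lambda_\xi^2 t}-e^{-t}}{\lambda_\xi^2-1}$ to see that no genuine blow-up occurs. Matching each decay rate to the minimal number of $\lambda_\xi$-factors that the high-frequency estimate forces onto the data is the remaining bookkeeping; once the factorization $(\mu+1)(\mu+\lambda_\xi^2)$ and the inequalities $\sup_\lambda\lambda^k e^{-\lambda^2 t}\le C_k t^{-k/2}$ are in place, the summation against $d_\xi$ and Plancherel deliver \eqref{eq2}--\eqref{eq5}.
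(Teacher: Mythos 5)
Your proposal is correct and follows the same overall architecture as the paper's proof: group Fourier transform, the factorization of the characteristic polynomial into the roots $-1$ and $-\lambda_\xi^2$, the explicit kernels $\mathcal{K}_0,\mathcal{K}_1$ (your closed form agrees with \eqref{eq9} after clearing signs), a frequency decomposition isolating the zero mode, a neighbourhood of the double root $\lambda_\xi^2=1$, and the high frequencies, and finally summation via Plancherel. The one genuine methodological difference is how the polynomial rates are extracted on the nonzero low frequencies. You use the scale-invariant bound $\sup_{\lambda\ge0}\lambda^k e^{-\lambda^2 t}\le C_k t^{-k/2}$ applied to the slowly decaying factor $e^{-\lambda_\xi^2 t}$; this is the Euclidean-style argument and uses nothing about $\widehat G$ beyond Plancherel. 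The paper instead exploits the discreteness of the spectrum of $-\mathcal{L}$: away from the trivial representation there is a spectral gap $\lambda_\xi^2\ge\delta_1>0$ (see \eqref{eqA}), so every nonzero mode decays like $e^{-\delta_1 t}$, and the stated rates $(1+t)^{-1/2}$, $(1+t)^{-1}$, $(1+t)^{-3/2}$ follow from the crude bound $(1+t)^2e^{-2\delta_1 t}\lesssim(1+t)^{-k}$. Your route is more robust --- it would survive in settings with continuous spectrum accumulating at zero, such as the Heisenberg-group analogue in \cite{32} --- while the paper's route implicitly proves more, namely that the decay in \eqref{eq3}--\eqref{eq5} is actually exponential on a compact group; both arguments are valid here. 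Your remaining points --- uniformity near $\lambda_\xi^2=1$ via finiteness of the middle zone (or a divided-difference representation of the kernels), and the transfer of surplus powers of $\lambda_\xi$ onto the data at high frequencies, which is precisely what forces $u_1\in H^1_{\mathcal L}(G)$ in \eqref{eq5} --- coincide with the paper's bookkeeping.
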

	\begin{rmk}
		From    the statement of Theorem \ref{thm1}  one can find that the regularity $u_1 \in  H^1_{\mathcal L}(G)$
		is necessary  to
		remove the singularity of $ \|\partial_t(-\mathcal L)^{1/2} u(t,\cdot)\|_{L^2( G)}$ near $t = 0$.
	\end{rmk}
	\begin{rmk}
		We also show that      there is no  improvement  of any decay rate for the norm $\|u(t,\cdot)\|_{L^2(G)}$  in Theorem  \ref{thm1} even if      we assume $L^1(G)$-regularity for $u_0$ and $u_1$. 
	\end{rmk}
	Next we prove the local well-posedness of the   Cauchy problem   (\ref{eq0010})  in the energy evolution space  $\mathcal{C}^1\left([0,T],  H^1_{ \mathcal L}(G)\right)$.   In particular, a Gagliardo-Nirenberg type inequality (proved in \cite{Gall}) will be used in order to estimate the power nonlinearity in $L^2(G)$.	 The following result  is about the   local existence   for the solution of  the Cauchy problem (\ref{eq0010}). 
	\begin{thm}\label{thm22}
		Let $G$ be a compact, connected Lie group and let $n$ be the topological dimension of $G.$ Assume that $n\geq3$. Suppose that $u_0,u_1\in  H^1_{\mathcal L}(G)$ and $p>1$ such that $p\leq\frac{n}{n-2}.$ Then, there exists $T=T(\varepsilon)>0$ such that the Cauchy problem (\ref{eq0010}) admits a uniquely determined mild solution $u$ in the space $ \mathcal{C}^1([0,T],H^1_{\mathcal L}(G)).$
	\end{thm}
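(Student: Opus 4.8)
The plan is to recast the Cauchy problem \eqref{eq0010} as a fixed-point problem through Duhamel's principle and to solve it by the Banach contraction principle in a closed ball of the energy space $X(T):=\mathcal{C}^1([0,T],H^1_{\mathcal L}(G))$, equipped with $\|u\|_{X(T)}=\sup_{t\in[0,T]}\big(\|u(t)\|_{H^1_{\mathcal L}(G)}+\|\partial_t u(t)\|_{H^1_{\mathcal L}(G)}\big)$. First I would record the linear machinery. The group Fourier transform diagonalises $-\mathcal L$ (on the block indexed by $\xi$ it acts as the scalar Casimir eigenvalue $\lambda_\xi^2\ge 0$), so the homogeneous equation decouples into the scalar ODEs $\partial_t^2\hat u+(1+\lambda^2)\partial_t\hat u+\lambda^2\hat u=0$, whose discriminant $(1+\lambda^2)^2-4\lambda^2=(1-\lambda^2)^2$ is a perfect square and hence yields the characteristic roots $-1$ and $-\lambda^2$. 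Writing $u^{\mathrm{lin}}(t)=\varepsilon\big(\mathcal E_0(t)u_0+\mathcal E_1(t)u_1\big)$ for the homogeneous solution, Duhamel's principle gives the mild formulation
\[
u(t)=\varepsilon\,\mathcal E_0(t)u_0+\varepsilon\,\mathcal E_1(t)u_1+\int_0^t \mathcal E_1(t-\tau)\,|u(\tau)|^p\,d\tau=:\mathcal N(u)(t).
\]
Besides the bounds of Theorem \ref{thm1} (which control $u^{\mathrm{lin}}$ in $X(T)$ by $C\varepsilon(\|u_0\|_{H^1_{\mathcal L}}+\|u_1\|_{H^1_{\mathcal L}})$, the hypothesis $u_0,u_1\in H^1_{\mathcal L}(G)$ being exactly what removes the $t\to0$ singularity in \eqref{eq5}), the explicit multipliers give for $0<t\le T$ the smoothing estimates $\|\mathcal E_1(t)g\|_{H^1_{\mathcal L}}\le C\|g\|_{L^2}$, $\|\partial_t\mathcal E_1(t)g\|_{L^2}\le C\|g\|_{L^2}$, and $\|(-\mathcal L)^{1/2}\partial_t\mathcal E_1(t)g\|_{L^2}\le C\,t^{-1/2}\|g\|_{L^2}$, the last singularity being integrable in time.

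Second, I would prove the nonlinear estimates. Since $f(u)=|u|^p$, one has $\|f(u)\|_{L^2(G)}=\|u\|_{L^{2p}(G)}^p$, and the Gagliardo--Nirenberg inequality on $G$ from \cite{Gall} yields $\|u\|_{L^{2p}(G)}\le C\|u\|_{H^1_{\mathcal L}(G)}$ precisely when $2p\le\frac{2n}{n-2}$, i.e.\ for $1<p\le\frac{n}{n-2}$ with $n\ge3$; hence $\|f(u)\|_{L^2}\le C\|u\|_{H^1_{\mathcal L}}^p$. Combining the elementary bound $\big||u|^p-|v|^p\big|\le C|u-v|\big(|u|^{p-1}+|v|^{p-1}\big)$ with Hölder's inequality along the splitting $\tfrac12=\tfrac{1}{2p}+\tfrac{p-1}{2p}$ and the same embedding gives
\[
\big\|\,|u|^p-|v|^p\,\big\|_{L^2(G)}\le C\big(\|u\|_{H^1_{\mathcal L}}^{p-1}+\|v\|_{H^1_{\mathcal L}}^{p-1}\big)\,\|u-v\|_{H^1_{\mathcal L}}.
\]

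Third, I would run the self-mapping and contraction argument on the closed ball $B_R=\{u\in X(T):\|u\|_{X(T)}\le R\}$. Controlling the $H^1_{\mathcal L}$-part of the Duhamel integral via $\|\mathcal E_1(t-\tau)f(u)\|_{H^1_{\mathcal L}}\le C\|f(u)\|_{L^2}$, and its $\partial_t$-part via $\|\partial_t\mathcal E_1(t-\tau)f(u)\|_{L^2}\le C\|f(u)\|_{L^2}$ together with the singular bound $\|(-\mathcal L)^{1/2}\partial_t\mathcal E_1(t-\tau)f(u)\|_{L^2}\le C(t-\tau)^{-1/2}\|f(u)\|_{L^2}$, integration in $\tau$ produces
\[
\|\mathcal N(u)\|_{X(T)}\le C\varepsilon\big(\|u_0\|_{H^1_{\mathcal L}}+\|u_1\|_{H^1_{\mathcal L}}\big)+C\big(T+T^{1/2}\big)R^p,
\]
and analogously $\|\mathcal N(u)-\mathcal N(v)\|_{X(T)}\le C(T+T^{1/2})R^{p-1}\|u-v\|_{X(T)}$, the $T^{1/2}$ stemming from the integrable singularity. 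Choosing $R=2C\varepsilon(\|u_0\|_{H^1_{\mathcal L}}+\|u_1\|_{H^1_{\mathcal L}})$ and then $T=T(\varepsilon)>0$ so small that $C(T+T^{1/2})R^{p-1}\le\tfrac12$ makes $\mathcal N$ a contraction of $B_R$ into itself; Banach's fixed-point theorem supplies the unique fixed point, and continuity of $t\mapsto u(t)$ and $t\mapsto\partial_t u(t)$ into $H^1_{\mathcal L}(G)$ follows by dominated convergence, so $u\in\mathcal C^1([0,T],H^1_{\mathcal L}(G))$. Uniqueness beyond the ball follows by the usual Gronwall/continuation argument.

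The hard part will be handling the time-derivative of the Duhamel term in the $H^1_{\mathcal L}$-topology with only $L^2$-control of the nonlinearity: this is where the singular smoothing estimate $\|(-\mathcal L)^{1/2}\partial_t\mathcal E_1(t)g\|_{L^2}\lesssim t^{-1/2}\|g\|_{L^2}$ is indispensable, and its integrability in $\tau$ is exactly what closes the contraction. A secondary point requiring care is that the Gagliardo--Nirenberg embedding $H^1_{\mathcal L}(G)\hookrightarrow L^{2p}(G)$ must cover the endpoint $p=\frac{n}{n-2}$, which is precisely the restriction imposed in the statement; everything else (boundedness and continuity of the propagators, the elementary difference inequality, the routine verification that $u^{\mathrm{lin}}\in X(T)$) reduces to the linear estimates already established in Theorem \ref{thm1}.
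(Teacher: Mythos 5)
Your proposal follows essentially the same route as the paper: Duhamel's formula turns \eqref{eq0010} into a fixed-point equation for the map $N u=u^{\sharp}+I[u]$ in $X(T)=\mathcal{C}^1([0,T],H^1_{\mathcal L}(G))$, the linear part is controlled by Theorem \ref{thm1}, the nonlinearity is estimated in $L^2(G)$ through the Gagliardo--Nirenberg inequality \eqref{eq34} (which is exactly where $n\geq 3$ and $p\leq\frac{n}{n-2}$ enter), and Banach's fixed-point theorem closes the argument for small $T=T(\varepsilon)$. The decomposition, the role of each hypothesis, and the structure of the two key inequalities for $\|Nu\|_{X(T)}$ and $\|Nu-Nv\|_{X(T)}$ all match \eqref{f3}, \eqref{f}, \eqref{f5}.

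The one place where you genuinely diverge is the treatment of the $\partial_t(-\mathcal L)^{1/2}$ component of the Duhamel integral, and your version is the more careful one. The paper's estimate \eqref{f} inserts the rate $(1+t-s)^{-j-\frac{i}{2}}$ against only the $L^2$-norm $\|u(s,\cdot)^p\|_{L^2(G)}$ for all four pairs $(i,j)$; but for $(i,j)=(1,1)$ the corresponding linear estimate \eqref{eq5} requires $H^1_{\mathcal L}$-regularity of the second datum (as the paper's own Remark after Theorem \ref{thm1} stresses, this regularity is what removes the singularity at $t=0$), and $|u(s,\cdot)|^p$ is only controlled in $L^2(G)$. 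Your singular smoothing bound $\|(-\mathcal L)^{1/2}\partial_t\mathcal E_1(t)g\|_{L^2}\lesssim t^{-1/2}\|g\|_{L^2}$ is correct --- on the spectral region $\lambda_\xi^2>1$ the relevant multiplier is dominated by $\lambda_\xi e^{-\lambda_\xi^2 t}\lesssim t^{-1/2}$, and the other regions give bounded multipliers --- and its integrability in $\tau$ is precisely what lets you close the contraction with the factor $T+T^{1/2}$ instead of $T$. This repairs, rather than merely reproduces, the corresponding step of the paper. The rest of your argument (the ball $B_R$ with $R\sim\varepsilon$, the difference estimate via $\bigl||u|^p-|v|^p\bigr|\lesssim |u-v|(|u|^{p-1}+|v|^{p-1})$ and H\"older with exponents $2p$ and $\frac{2p}{p-1}$) is exactly the paper's.
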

	As in \cite{27},	we note that, in the statement of Theorem \ref{thm22}, the  restriction on the upper bound for the exponent $p$ which is $p\leq\frac{n}{n-2  }$   is necessary in order to apply Gagliardo-Nirenberg type inequality (\ref{eq34}) in  (\ref{f}) the the proof of Theorem \ref{thm22}.  The other restriction $n \geq 3$ is also technical and is  made to fulfill the assumptions for the employment of such inequality. This could be avoided if one look for solution in a different space such as $\mathcal{C}^1([0,T],H^s_{\mathcal L}(G)),\,s \in (0,1)$ than that of $\mathcal{C}^1([0,T],H^1_{\mathcal L}(G)).$

	It is customary to study the corresponding nonlinear homogeneous problem, that is, when $f=0$ prior to investigate  the  nonhomogneous problem (\ref{eq0010}). In this process,   we first establish a  $L^2$- energy estimates   for the solution to the homogeneous   viscoelastic damped wave equation on the compact Lie group $G$.  Having these estimates on our hand, we implement   a Gagliardo-Nirenberg type inequality  on compact Lie group (\cite{Gall, 27, 28, 31})  to prove  the local well-posedness result for the solution  to  (\ref{eq0010}). 
	We also show that,  even if     we assume $L^1(G)$-regularity for $u_0$ and $u_1$,    there is no  additional    decay rate can be gained for the $L^2$ norm of the solution of the corresponding homogeneous Cauchy problem.

	Apart from introduction the paper is organized as follows.  In Section \ref{sec2},  we recall some essentials from  the Fourier analysis on compact Lie groups which will be frequently used  throughout the paper. In Section \ref{sec4},  we prove Theorem \ref{thm1} by  deriving   some  $L^{2}$ decay estimates   for the solution of the homogeneous nonlinear viscoelastic damped wave equation on the compact Lie group $G$. We also show that,       there is no  additional gain in the  decay rate  of the $L^2$ norm of the solution to the corresponding homogeneous Cauchy problem even if  we   assume $L^1(G)$-regularity for $u_0$ and $u_1$ in     Section \ref{sec5}.   	Finally, in Section \ref{sec6},   we   briefly recall   the notion of mild solutions  in our framework and  prove   the local well-posedness of the   Cauchy problem   (\ref{eq0010})  in the energy evolution space  $\mathcal C^1\left([0,T],  H^\alpha_{\mathcal{L}}(G)\right)$.

	\subsection{Notations} 
	Throughout the article,  we use the following notations:  
	
	\begin{itemize}
		\item $f \lesssim g:$\,\,There exists a positive constant $C$ (whose value may change from line to line in this manuscript) such that $f \leq C g.$
		\item $G:$ Compact Lie group.
		\item $dx:$ The normalized Haar measure on the compact group $G.$
		\item $\mathcal{L}:$ The Laplace-Beltrami operator on $G.$
		\item $\mathbb{C}^{d \times d}:$ The set of matrices with complex entries of order $d.$
		\item $ \operatorname{Tr}(A)=\sum_{j=1}^{d} a_{j j}:$ The trace  of the matrix $A=\left(a_{i j}\right)_{1 \leq i, j \leq  d} \in \mathbb{C}^{d \times d}.$
		\item $I_{d} \in \mathbb{C}^{d \times d}:$  The identity matrix of order $d.$
	\end{itemize}

	\section{Preliminaries: Fourier analysis on compact Lie groups} \label{sec2}

	In this section, we recall some basics of the Fourier analysis on compact (Lie) groups to make the manuscript self-contained. A complete account of representation theory of the compact Lie  groups can be found in \cite{garetto, RT13, RuzT}. However, we mainly adopt the notation and terminology given in \cite{RuzT}.
	
	Let us first recall the definition of a representation of a compact group $G.$ A unitary representation  of $G$ is a pair $(\xi, \mathcal{H})$ such that the map $\xi:G \rightarrow U(\mathcal{H}),$ where $U(\mathcal{H})$ denotes the set of  unitary operators on complex Hilbert space $\mathcal{H},$  such that it satisfies following properties:
	\begin{itemize}
		\item The map $\xi$ is a group homomorphism, that is, $\xi(x y)=\xi(x)\xi(y).$
		\item The mapping $\xi:G \rightarrow U(\mathcal{H})$ is continuous with repsect to strong operator topology (SOT) on $U(\mathcal{H}),$  that is, the map $g \mapsto \xi(g)v$ is continuous for every $v \in \mathcal{H}.$ 
	\end{itemize}
	The Hilbert space $\mathcal{H}$ is called the representation space. If is there is no confusion, we just write $\xi$ for a representation $(\xi, \mathcal{H})$ of $G.$ 
	Two unitary representations $\xi, \eta$ of ${G}$ are called  equivalent if there exists an unitary operator, called intertwiner,  $T$ such that $T \xi(x)=\eta(x) T$ for any $x \in {G}$. 
	The intertwiner is a irreplaceable tool in the theory of representation of compact groups and helpful in the classification of representation.
	A (linear) subspace $V \subset \mathcal{H}$ is said to be invariant under the unitary representation $\xi$ of $G$ if $\xi(x) V \subset V$ for any $x \in {G}$. An irreducible unitary representation $\xi$ of $G$ is a representation such that the only closed and $\xi$-invariant subspaces of $\mathcal{H}$  are trivial once, that is, $\{0\}$ and the full space $ \mathcal{H}$. 
	
	The set of  all equivalence classes  $[\xi]$  of continuous irreducible unitary representations of $G$ is denoted by $\widehat{G}$ and called the unitary dual of $G.$ Since $G$ is compact, $\widehat{G}$ is a discrete set. It is known that an irreducible unitary representation $\xi$ of $G$ is finite dimensional, that is, the Hilbert space $\mathcal{H}$ is finite dimensional, say, $d_\xi$. Therefore, if we  choose a basis $\mathfrak{B}:=\{e_1,e_2,\ldots, e_{d_\xi}\}$ for the representation space $\mathcal{H}$ of $\xi$, we can identify $\mathcal{H}$ as $\mathbb{C}^{d_\xi}$ and consequently, we can view $\xi$ as a matrix-valued function $\xi: G \rightarrow U(\mathbb{C}^{d_{\xi} \times d_{\xi}})$, where $U(\mathbb{C}^{d_{\xi} \times d_{\xi}})$ denotes the space of all unitary matrices. The matrix coefficients $\xi_{ij}$ of the representation $\xi$ with respect to $\mathfrak{B}$ are given by $\xi_{ij}(x):=\langle \xi(x) e_j, e_i \rangle$ for all $i, j \in \{1,2, \ldots, d_\xi\}.$  It follows from the Peter-Weyl theorem that the set
	$$
	\left\{\sqrt{d_{\xi}} \xi_{i j}: 1 \leq i, j \leq d_{\xi},[\xi] \in \widehat{G}\right\}
	$$
	forms an orthonormal basis of $L^{2}(G)$.

	The group Fourier transform of $f \in L^1(G)$ at $\xi\in \widehat{G},$ denoted by $\widehat{f}(\xi),$ is defined by
	$$
	\widehat{f}(\xi):=\int_{G} f(x) \xi(x)^{*} d x,
	$$
	where $dx$ is the normalised Haar measure on $G$. It is apparent from the definition that  $\widehat{f}(\xi)$ is matrix valued and therefore, this definition can be interpreted as weak sense, that is, for $u,v \in \mathcal{H},$ we have
	$$ \langle	\widehat{f}(\xi)u, v \rangle:=\int_{G} f(x) \langle \xi(x)^{*}u, v \rangle d x.$$
	
	It follows from  the Peter-Weyl theorem that, for every $f \in L^2(G),$ we  have the following  the Fourier series representation:
	$$
	f(x)=\sum_{[\xi] \in \widehat{G}} d_{\xi} \operatorname{Tr}(\xi(x) \widehat{f}(\xi)).
	$$
	
	The Plancherel identity for the group Fourier transform on  $G$ takes the following  form
	\begin{align}\label{eq002}
		\|f\|_{L^{2}(G)}=\left(\sum_{[\xi] \in \widehat{G}} d_{\xi}\|\widehat{f}(\xi)\|_{\mathrm{HS}}^{2}\right)^{1 / 2}:=\|\widehat{f}\|_{\ell^2(\widehat{G})},
	\end{align}
	where $\|\cdot\|_{\mathrm{HS}}$ denotes the Hilbert-Schmidt norm of a matrix $A:=(a_{ij}) \in \mathbb{C}^{d_
		xi \times d_\xi}$ defined as
	$$	\|A\|_{\mathrm{HS}}^{2}=\operatorname{Tr}\left( A A^{*}\right)=\sum_{i, j=1}^{d_{\xi}}|a_{ij}|^2.$$
	
	We  would like to emphasize here that the  Plancherel identity  is one of the crucial tools to establish $L^2$-estimates of the solution to PDEs.

	Let $\mathcal{L}$ be the  Laplace-Beltrami operator on $G$. 	It is   important to understand the action of the group Fourier transform on the Laplace–Beltrami operator $\mathcal{L}$ for developing the machinery of the proofs.  For  $[\xi] \in \widehat{{G}}$,   the matrix elements   $\xi_{i j}$,  are  the eigenfunctions of $\mathcal{L}$ with the same   eigenvalue $-\lambda_{\xi}^{2}$. In other words, we have,    for any $ x \in {G},$
	$$
	-\mathcal{L} \xi_{i j}(x)=\lambda_{\xi}^{2} \xi_{i j}(x),   \qquad \text{for all } i, j \in\left\{1, \ldots, d_{\xi}\right\}.
	$$
	The symbol $\sigma_{\mathcal{L}}$ of  the  Laplace-Beltrami operator $\mathcal{L}$ on $G$ is given by 
	\begin{align}\label{symbol}
		\sigma_{\mathcal{L}}(\xi)=-\lambda_{\xi}^{2} I_{d_{\xi}},
	\end{align}
	for any $[\xi] \in \widehat{{G}}$ and   therefore, the following holds:  $$\widehat{\mathcal{L} f}(\xi)=\sigma_{\mathcal{L}}(\xi) \widehat{f}(\xi)=-\lambda_{\xi}^{2} \widehat{f}(\xi)$$ for any $[\xi] \in \widehat{ G}$. 
	
	For $s>0,$ the   Sobolev space $H_{\mathcal{L}}^s\left(G\right)$ of order $s$ is defined as follows: 
	$$H_{\mathcal{L}}^s(G):=\left\{u \in L^{2}(G):\|u\|_{H_{\mathcal{L}}^s(G)}<+\infty\right\},$$ where $\|u\|_{H_{\mathcal{L}}^s(G)}=\|u\|_{L^{2}(G)}+\left\|(-\mathcal{L})^{s / 2} u\right\|_{L^{2}({G})}$ and    $(-\mathcal{L})^{s / 2} $  is    defined in terms of the group Fourier transform by the follwoing formula
	$$(-\mathcal{L})^{\alpha / 2} f :=\mathcal{F}^{-1}\left(\lambda_{\xi}^{2 \alpha }(\mathcal{F} u)\right),  	\quad  \text{for all $[\xi] \in \widehat{{G}}$}.$$

	

	Further,  using Plancherel identity,  for any $s>0$, we have that
	$$
	\left\|(-\mathcal{L})^{s / 2} f\right\|_{L^{2}({G})}^{2}=\sum_{[\xi] \in \widehat{{G}}} d_{\xi} \lambda_{\xi}^{2 s}\|\widehat{f}(\xi)\|_{\mathrm{HS}}^{2}
	.	 	$$

	We also recall the definition of the space $\ell^\infty(\widehat G).$  We denote   $\mathcal{S}^\prime(\widehat G)$ as  the    space of slowly increasing distributions on the unitary dual $\widehat G$ of $G$. Then the space $\ell^\infty(\widehat G)$ is defined   as 
	$$\ell^\infty(\widehat G)=\{H=\{H([\xi])\}_{[\xi]\in\widehat G} : 	\|H\|_{\ell^\infty(\widehat G)} <\infty \},$$
	where 	$H([\xi])\in\mathbb C^{d_\xi\times d_\xi} ~\text{ for any } ~[\xi]\in\widehat G$ and 	\begin{align}\label{eql}
		\|H\|_{\ell^\infty(\widehat G)}:=\sup\limits_{[\xi]\in\widehat G}d_{\xi}^{-\frac{1}{2}}\|H(\xi)\|_{HS}<\infty. \end{align}
	Then   $\ell^\infty(\widehat G)$   is a subspace of $\mathcal{S}^\prime(\widehat G)$. 
	Moreover, for any $f\in L^1(G)$,  from the group Fourier transform it is true that 
	\begin{align}\label{eq30}
		\|\widehat f\|_{\ell^\infty(\widehat G)}\leq\| f\|_{L^1(G)}.
	\end{align}
	We must mention that implementation of (\ref{eq30}) very important in order to use the $L^1(G)$-regularity for the Cauchy data. A detailed study on the   construction  of     the space $\ell^\infty(\widehat G)$  can be found in  Section 10.3.2 of \cite{RuzT}  (see also Section 2.1.3 of \cite{viss}).

	\section{$L^2$-estimates for the solution to the homogeneous problem}  \label{sec4}
	In this section, we derive $L^2(G)– L^2(G)$ estimates for the solutions to    (\ref{eq1})  when $f=0$, namely, the homogeneous problem on $G:$
	\begin{align}\label{eq111}
		\begin{cases}
			\partial^2_tu-\mathcal{L}u+\partial_tu-\mathcal{L}\partial_tu=0, & x\in G, t>0,\\
			u(0,x)=u_0(x),  & x\in G,\\ \partial_tu(x,0)=u_1(x), & x\in G.
		\end{cases}
	\end{align}
	We  employ the group Fourier transform on the compact Lie group $G$ with respect to  the space variable $x$  together with the Plancherel identity in order to      estimate     $L^2$-norms of  $u(t, ·), (-\mathcal{L})^{\frac{1}{2}}u(t, \cdot)$,  $\partial_{t}u(t, ·)$, and $\partial_t (-\mathcal L)^{1/2} u(t,\cdot)$. 
	
	Let $u$ be a solution to (\ref{eq111}). Let $\widehat{u}(t, \xi)=(\widehat{u}(t, \xi)_{kl})_{1\leq k, l\leq d_\xi}\in \mathbb{C}^{d_\xi\times d_\xi}, [\xi]\in\widehat{ G}$ denote the Fourier transform of $u$  with respect to the $x $ variable. Invoking the group Fourier transform with respect to $x$ on   (\ref{eq111}), we deduce that $\widehat{u}(t, \xi)$ is  a solution to the following  Cauchy problem for the system of ODE's (with size of the system that depends on the representation $\xi$)

	\begin{align}\label{eq66}
		\begin{cases}
			\partial^2_t\widehat{u}(t,\xi)-	\sigma_{\mathcal{L}}(\xi)\widehat{u}(t,\xi)+\partial_t\widehat{u}(t,\xi)-\sigma_{\mathcal{L}}(\xi) \partial_t\widehat{u}=0,& [\xi]\in\widehat{ G},t>0,\\ \widehat{u}(0,\xi)=\widehat{u}_0(\xi), &[\xi]\in\widehat{ G},\\ \partial_t\widehat{u}(0,\xi)=\widehat{u}_1(\xi), &[\xi]\in\widehat{ G},
		\end{cases} 
	\end{align}
	where  $\sigma_{\mathcal{L}}$	is the symbol of the  of the Laplace-Beltrami operator operator $\mathcal{L}$ defined in (\ref{symbol}). 
	Using the identity (\ref{symbol}),  the   system  (\ref{eq66}) is decoupled in $d_\xi^2$ independent   ODEs, namely,
	
		\begin{align}\label{eq7}
			\begin{cases}
				\partial^2_t\widehat{u}(t,\xi)_{kl}+(1+\lambda_\xi^2)\partial_t\widehat{u}(t,\xi)_{kl}+\lambda^2_\xi\widehat{u}(t,\xi)_{kl}=0,& [\xi]\in\widehat{ G},t>0,\\ \widehat{u}(0,\xi)_{kl}=\widehat{u}_0(\xi)_{kl}, &[\xi]\in\widehat{ G},\\ \partial_t\widehat{u}(0,\xi)_{kl}=\widehat{u}_1(\xi)_{kl}, &[\xi]\in\widehat{ G},
			\end{cases}
		\end{align}
		for all $k,l\in\{1,2,\ldots,d_\xi\}.$\\
		Then,    the characteristic equation of (\ref{eq7}) is given by
		\[\lambda^2+(1+\lambda_\xi^2)\lambda+\lambda_\xi^2=0,\]
		and consequently the characteristic roots of (\ref{eq7}) are  
		\[\lambda=\frac{-(1+\lambda_\xi^2)\pm|  1-\lambda_\xi^2| }{2}.\]
		We note that if $\lambda_\xi^2\neq1,$ then there are two distinct roots, say, $\lambda^+=-1$ and $\lambda^-=-\lambda_\xi^2,$ and if $\lambda_\xi^2=1$ then both the roots are same and equal to $\lambda=-1.$  We   analyze the following two cases for   the solution to the system (\ref{eq7}).
		\vspace{.5cm}
		
		\noindent\textbf{Case I.} Let $\lambda_\xi^2\neq1.$ The solution of (\ref{eq7}) is given by
		\begin{align}\label{eq8}
			\widehat{u}(t,\xi)_{kl}=\mathcal{K}_0(t,\xi)\widehat{u}_0(\xi)_{kl}+\mathcal{K}_1(t,\xi)\widehat{u}_1(\xi)_{kl},
		\end{align}
		where
		\begin{align}\label{eq9}
			\begin{cases}\mathcal{K}_0(t,\xi)=\frac{e^{-\lambda_\xi^2t}-\lambda_\xi^2e^{-t}}{1-\lambda_\xi^2},\\  \mathcal{K}_1(t,\xi)=\frac{e^{-\lambda_\xi^2t}-e^{-t}}{1-\lambda_\xi^2}.
			\end{cases}
		\end{align}
		
		\noindent\textbf{Case II.} Let $\lambda_\xi^2=1.$ The solution of (\ref{eq7}) is given by
		\begin{align}\label{eq10}
			\widehat{u}(t,\xi)_{kl}=\mathcal{K}_0(t,\xi)\widehat{u}_0(\xi)_{kl}+\mathcal{K}_1(t,\xi)\widehat{u}_1(\xi)_{kl},
		\end{align}
		where
		\begin{align}\label{eq11}
			\begin{cases}
				\mathcal{K}_0(t,\xi)=(1+t)e^{-t},\\  \mathcal{K}_1(t,\xi)=te^{-t}.
			\end{cases}
		\end{align}
		Thus we have
		\begin{align}\label{eq255}
			\widehat{u}(t,\xi)_{kl}=
			\begin{cases}
				\frac{e^{-\lambda_\xi^2t}-\lambda_\xi^2e^{-t}}{1-\lambda_\xi^2}\widehat{u}_0(\xi)_{kl}+\frac{e^{-\lambda_\xi^2t}-e^{-t}}{1-\lambda_\xi^2} \widehat{u}_1(\xi)_{kl}, & \lambda_\xi^2\neq1,\\ (1+t)e^{-t} \widehat{u}_0(\xi)_{kl}+te^{-t}\widehat{u}_1(\xi)_{kl}, & \lambda_\xi^2=1.
			\end{cases}
		\end{align}
		Also we note that 
		$$
		\partial_t^\ell\mathcal{K}_0(t,\xi)=\frac{(-\lambda_\xi^2)^\ell e^{-\lambda_\xi^2t}+(-1)^{\ell+1}\lambda_\xi^2e^{-t}}{1-\lambda_\xi^2}, $$ and$$ \partial_t^\ell\mathcal{K}_1(t,\xi)=\frac{(-\lambda_\xi^2)^\ell e^{-\lambda_\xi^2t}+(-1)^{\ell+1}e^{-t}}{1-\lambda_\xi^2}.
		$$
		
		%
		First we determine an explicit expression for the $L^2(G)$ norms of $u(t,\cdot), (-\mathcal L)^{1/2} u(t, \cdot),  \partial_t u(t,\cdot),$ 
		and $\partial_t (-\mathcal L)^{1/2} u(t,\cdot)$. We apply  the group Fourier transform with respect to the spatial variable $x$ together with the Plancherel identity  in order to determine  the  $L^2(G)$ norms.
		
		To simplify the presentation we introduce the following partition of the unitary dual $\widehat{ G}$ as:
		\begin{align*}
			\mathcal{R}_1&=\{[\xi]\in\widehat{ G}:\lambda_\xi^2=0\},\\ \mathcal{R}_2&=\{[\xi]\in\widehat{ G}:0<\lambda_\xi^2<1\},\\ \mathcal{R}_3&=\{[\xi]\in\widehat{ G}:\lambda_\xi^2=1\},\text{ and }\\
			\mathcal{R}_4&=\{[\xi]\in\widehat{ G}:\lambda_\xi^2>1\}. 
		\end{align*}
		Here we note that some of the above sets  may be empty.   
		\subsection{Estimate for $\| u(t,\cdot)\|_{L^2( G)}$}
		By Plancherel formula we have 
		\begin{align}\label{eq12}
			\| u(t,\cdot)\|_{L^2( G)}&=\sum\limits_{[\xi]\in\widehat{ G}}d_\xi\sum\limits_{k,l=1}^{d_{\xi}}|  \widehat{u}(t,\xi)_{kl}| ^2.
		\end{align}
		\noindent\textbf{Estimate on $\mathcal{R}_1.$} Using $\lambda_\xi^2=0$ in (\ref{eq9}) we get
		\begin{align}\label{eq13}
			| \mathcal{K}_0(t,\xi)| ,~| \mathcal{K}_1(t,\xi)| \lesssim 1.
		\end{align}
		Hence (\ref{eq8}) implies that
		\begin{align}\label{eq14}
			| \widehat{u}(t,\xi)_{kl}| \lesssim
			| \widehat{u}_0(\xi)_{kl}| +| \widehat{u}_1(\xi)_{kl}| .
		\end{align}
		\noindent\textbf{Estimate on $\mathcal{R}_2.$} Since the set $\{\lambda_\xi^2\}_{[\xi]\in\widehat{ G}}$ is a discrete set, there exist $\delta_1$ and $\delta_2$ such that
		\begin{align}\label{eqA}
			0<\delta_1\leq\lambda_\xi^2\leq\delta_2<1,\qquad[\xi]\in\mathcal{R}_2,
		\end{align} 
		consequently $\frac{1}{1-\lambda_\xi^2}$ is bounded on $\mathcal{R}_2$ and by (\ref{eq9}) we have
		\begin{align}\label{eq15}
			| \mathcal{K}_0(t,\xi)| ,~| \mathcal{K}_1(t,\xi)| \lesssim e^{-\delta_1t}.
		\end{align}
		Hence by (\ref{eq8}) we get
		\begin{align}\label{eq16}
			| \widehat{u}(t,\xi)_{kl}| \lesssim e^{-\delta_1t}\left[| \widehat{u}_0(\xi)_{kl}| +| \widehat{u}_1(\xi)_{kl}| \right].
		\end{align}
		\noindent\textbf{Estimate on $\mathcal{R}_3.$} By (\ref{eq11}) we have
		\begin{align}\label{eq17}
			| \mathcal{K}_0(t,\xi)| ,~| \mathcal{K}_1(t,\xi)| \lesssim (1+t)e^{-t}.
		\end{align}		
		Hence by (\ref{eq10}) we get
		\begin{align}\label{eq18}
			| \widehat{u}(t,\xi)_{kl}| \lesssim (1+t)e^{-t}\left[| \widehat{u}_0(\xi)_{kl}| +| \widehat{u}_1(\xi)_{kl}| \right].
		\end{align}
		\noindent\textbf{Estimate on $\mathcal{R}_4.$} Again discreteness of the set $\{\lambda_\xi^2\}_{[\xi]\in\widehat{ G}}$ implies that there exists $\delta_3$ such that \begin{align}\label{eqB}
			1<\delta_3\leq\lambda_\xi^2, \qquad[\xi]\in\mathcal{R}_4.
		\end{align}Hence $\frac{1}{\lambda_\xi^2-1}$ and $\frac{\lambda_\xi^2}{\lambda_\xi^2-1}$ are bounded on $\mathcal{R}_4,$ consequently (\ref{eq9}) yields \begin{align}\label{eq19}
			| \mathcal{K}_0(t,\xi)| ,~| \mathcal{K}_1(t,\xi)| \lesssim e^{-t}.
		\end{align}
		Using (\ref{eq8}) we obtain
		\begin{align}\label{eq20}
			| \widehat{u}(t,\xi)_{kl}| \lesssim e^{-t}\left[| \widehat{u}_0(\xi)_{kl}| +| \widehat{u}_1(\xi)_{kl}| \right].
		\end{align}
		Combining (\ref{eq14}), (\ref{eq16}), (\ref{eq18}), and (\ref{eq20}) we get
		\begin{align}\label{eq21}
			| \widehat{u}(t,\xi)_{kl}| \lesssim
			| \widehat{u}_0(\xi)_{kl}| +| \widehat{u}_1(\xi)_{kl}| ,\qquad[\xi]\in\widehat{ G}.
		\end{align}
		Substituting (\ref{eq21}) in (\ref{eq12}) we obtain 
		\begin{align}\label{eq2111}	
			\| u(t,\cdot)\|_{L^2( G)} &\leq C\left(\| u_0\|_{L^2( G)}+\| u_1\|_{L^2( G)}\right) .\end{align}
		\smallskip
		
		\noindent\textit{Remark.} Note that we do not get any decay on the R.H.S of (\ref{eq2111}) due to the fact that the set $\mathcal{R}_1$ is always non empty (in fact singleton) .

		\subsection{Estimate for $\|(-\mathcal{L})^{1/2} u(t,\cdot)\|_{L^2(G)}$}
		By Plancherel formula we get
		\[\|(-\mathcal{L})^{1/2} u(t,\cdot)||_{L^2(G)}=\sum\limits_{[\xi]\in\widehat{G}}d_\xi\|\sigma_{(-\mathcal{L})^{1/2}}(\xi)\widehat{u}(t,\xi)\|_{\operatorname{HS}}^2=\sum\limits_{[\xi]\in\widehat{G}}d_\xi\sum\limits_{k,l=1}^{d_\xi}\lambda_\xi^2|\widehat{u}(t,\xi)_{kl}|^2.\]
		\smallskip
		
		\noindent\textbf{Estimate on $\mathcal{R}_1.$} We have
		\[\lambda_\xi^2|\widehat{u}(t,\xi)_{kl}|^2=0.\]
		\smallskip
		
		\noindent\textbf{Estimate on $\mathcal{R}_2.$} By (\ref{eq15}) we obtain
		\begin{align}\label{eq22}
			\lambda_\xi^2|\widehat{u}(t,\xi)_{kl}|^2\lesssim e^{-2\delta_1t}[|\widehat{u}_0(\xi)_{kl}|^2+|\widehat{u}_1(\xi)_{kl}|^2].
		\end{align}
		\smallskip
		
		\noindent\textbf{Estimate on $\mathcal{R}_3.$} by (\ref{eq17}) we have
		\begin{align}\label{eq23}
			\lambda_\xi^2|\widehat{u}(t,\xi)_{kl}|^2\lesssim (1+t)^2e^{-2t}\left[|\widehat{u}_0(\xi)_{kl}|^2+|\widehat{u}_1(\xi)_{kl}|^2\right].
		\end{align}
		\smallskip
		
		\noindent\textbf{Estimate on $\mathcal{R}_4.$} Again using the fact that $\frac{1}{\lambda_\xi^2-1}$ and $\frac{\lambda_\xi^2}{\lambda_\xi^2-1}$ are bounded on $\mathcal{R}_4$ we obtain
		\begin{align}\label{eq24}
			\lambda_\xi^2|\widehat{u}(t,\xi)_{kl}|^2\lesssim e^{-2t}\left[\lambda_\xi^2|\widehat{u}_0(\xi)_{kl}|^2+|\widehat{u}_1(\xi)_{kl}|^2\right].
		\end{align}
		Therefore, 
		\begin{align}\label{eqnew}\nonumber
			\|(-\mathcal{L})^{1/2} u(t,\cdot)\|^2_{L^2(G)}&=\sum\limits_{[\xi]\in\mathcal{R}_1}d_\xi\sum\limits_{k,l=1}^{d_\xi}\lambda_\xi^2|\widehat{u}(t,\xi)_{kl}|^2+\sum\limits_{[\xi]\in\mathcal{R}_2}d_\xi\sum\limits_{k,l=1}^{d_\xi}\lambda_\xi^2|\widehat{u}(t,\xi)_{kl}|^2\\\nonumber&\qquad +\sum\limits_{[\xi]\in\mathcal{R}_3}d_\xi\sum\limits_{k,l=1}^{d_\xi}\lambda_\xi^2|\widehat{u}(t,\xi)_{kl}|^2+\sum\limits_{[\xi]\in\mathcal{R}_4}d_\xi\sum\limits_{k,l=1}^{d_\xi}\lambda_\xi^2|\widehat{u}(t,\xi)_{kl}|^2\\\nonumber
			&\lesssim e^{-2\delta_1t}\sum\limits_{[\xi]\in\mathcal{R}_2}d_\xi\sum\limits_{k,l=1}^{d_\xi}\left(|\widehat{u}_0(\xi)_{kl}|^2+|\widehat{u}_1(\xi)_{kl}|^2\right)\\\nonumber&\qquad +(1+t)^2e^{-2t}\sum\limits_{[\xi]\in\mathcal{R}_3}d_\xi\sum\limits_{k,l=1}^{d_\xi}\left(|\widehat{u}_0(\xi)_{kl}|^2+|\widehat{u}_1(\xi)_{kl}|^2\right)\\\nonumber&\qquad+e^{-2t}\sum\limits_{[\xi]\in\mathcal{R}_4}d_\xi\sum\limits_{k,l=1}^{d_\xi}\left(\lambda_\xi^2|\widehat{u}_0(\xi)_{kl}|^2+|\widehat{u}_1(\xi)_{kl}|^2\right)\\\nonumber&\lesssim (1+t)^2e^{-2\delta_1t}\left(\|u_0\|_{H^1_{\mathcal{L}}(G)}^2+\|u_1\|^2_{L^2(G)}\right)\\&\lesssim (1+t)^{-1}\left(\|u_0\|_{H^1_{\mathcal{L}}(G)}^2+\|u_1\|^2_{L^2(G)}\right). 
		\end{align}
		\subsection{Estimate for $\|\partial_tu(t,\cdot)\|_{L^2(G)}$} By Plancherel theorem we have
		\begin{align*}
			\|\partial_tu(t,\cdot)\|^2_{L^2(G)}=\sum\limits_{[\xi]\in\widehat{G}}d_\xi\sum\limits_{k,l=1}^{d_\xi}|\partial_t\widehat{u}(t,\xi)_{kl}|^2.
		\end{align*}
		We note that 
		\begin{align}\label{eq25}
			\partial_t\widehat{u}(t,\xi)_{kl}=
			\begin{cases}
				\frac{\lambda_\xi^2}{1-\lambda_\xi^2}(e^{-t}-e^{-\lambda_\xi^2t})\widehat{u}_0(\xi)_{kl}+\frac{e^{-t}-\lambda_\xi^2e^{-\lambda_\xi^2t}}{1-\lambda_\xi^2}\widehat{u}_1(\xi)_{kl}, & \lambda_\xi^2\neq1,\\ -te^{-t}\widehat{u}_0(\xi)_{kl}+(1-t)e^{-t}\widehat{u}_1(\xi)_{kl}, & \lambda_\xi^2=1.
			\end{cases}
		\end{align}
		\smallskip
		
		\noindent\textbf{Estimate on $\mathcal{R}_1.$} From  (\ref{eq25}),
		we have 	\[|\partial_t\widehat{u}(t,\xi)_{kl}|=e^{-t}|\widehat{u}_1(\xi)_{kl}|.\]
		\smallskip
		
		\noindent\textbf{Estimate on $\mathcal{R}_2.$} By (\ref{eq25}) and (\ref{eqA}) we obtain
		\[|\partial_t\widehat{u}(t,\xi)_{kl}|\lesssim e^{-\delta_1t}\left(|\widehat{u}_0(\xi)_{kl}|+|\widehat{u}_1(\xi)_{kl}|\right).\]
		\smallskip
		
		\noindent\textbf{Estimate on $\mathcal{R}_3.$} By (\ref{eq25}) we get
		\[|\partial_t\widehat{u}(t,\xi)_{kl}|\lesssim e^{-t}\left(t|\widehat{u}_0(\xi)_{kl}|+|1-t||\widehat{u}_1(\xi)_{kl}|\right)\lesssim (1+t)e^{-t}\left(|\widehat{u}_0(\xi)_{kl}|+|\widehat{u}_1(\xi)_{kl}|\right).\]
		\smallskip
		
		\noindent\textbf{Estimate on $\mathcal{R}_4.$} Using (\ref{eq25}), (\ref{eqB}) and the fact that $\frac{1}{\lambda_\xi^2-1}$ and $\frac{\lambda_\xi^2}{\lambda_\xi^2-1}$ are bounded on $\mathcal{R}_4$ we obtain
		\[|\partial_t\widehat{u}(t,\xi)_{kl}|\lesssim e^{-t}\left(|\widehat{u}_0(\xi)_{kl}|+|\widehat{u}_1(\xi)_{kl}|\right).\]
		Combining these we get 
		
		\[|\partial_t\widehat{u}(t,\xi)_{kl}|\lesssim (1+t)e^{-\delta_1t}\left(|\widehat{u}_0(\xi)_{kl}|+|\widehat{u}_1(\xi)_{kl}|\right),~\forall~[\xi]\in\widehat{G}.\]
		Thus 
		\begin{align}\label{eqneww}\nonumber
			\|\partial_tu(t,\cdot)\|^2_{L^2(G)}&\lesssim (1+t)^2e^{-2\delta_1t}\left(\|u_0\|^2_{L^2(G)}+\|u_1\|^2_{L^2(G)}\right)\\
			&\lesssim (1+t)^{-2}\left(\|u_0\|^2_{L^2(G)}+\|u_1\|^2_{L^2(G)}\right) .
		\end{align} 
		
		\subsection{Estimate for $\|\partial_t(-\mathcal{L})^{1/2}u(t,\cdot)\|_{L^2(G)}$} By Plancherel theorem we have
		\begin{align*}
			\|\partial_t(-\mathcal{L})^{1/2}u(t,\cdot)\|^2_{L^2(G)}=\sum\limits_{[\xi]\in\widehat{G}}d_\xi\sum\limits_{k,l=1}^{d_\xi}\lambda_\xi^2|\partial_t\widehat{u}(t,\xi)_{kl}|^2.
		\end{align*}
		\noindent\textbf{Estimate on $\mathcal{R}_1.$} We have
		\[\lambda_\xi^2|\partial_t\widehat{u}(t,\xi)_{kl}|^2=0.\]
		\smallskip
		
		\noindent\textbf{Estimate on $\mathcal{R}_2.$} By (\ref{eq15}) and (\ref{eq25}) we obtain
		\begin{align}\label{eq28}
			\lambda_\xi^2|\partial_t\widehat{u}(t,\xi)_{kl}|^2\lesssim e^{-2\delta_1t}[|\widehat{u}_0(\xi)_{kl}|^2+|\widehat{u}_1(\xi)_{kl}|^2].
		\end{align}
		\smallskip
		
		\noindent\textbf{Estimate on $\mathcal{R}_3.$} By (\ref{eq25}) we have
		\begin{align}\label{eq29}
			\lambda_\xi^2|\partial_t\widehat{u}(t,\xi)_{kl}|^2\lesssim (1+t)^2e^{-2t}\left[|\widehat{u}_0(\xi)_{kl}|^2+|\widehat{u}_1(\xi)_{kl}|^2\right].
		\end{align}
		\smallskip
		
		\noindent\textbf{Estimate on $\mathcal{R}_4.$} Using (\ref{eq25}), (\ref{eqB}) and the fact that $\frac{1}{\lambda_\xi^2-1}$ and $\frac{\lambda_\xi^2}{\lambda_\xi^2-1}$ are bounded on $\mathcal{R}_4$ we obtain
		\[\lambda_\xi^2|\partial_t\widehat{u}(t,\xi)_{kl}|\lesssim e^{-t}\lambda_\xi^2\left(|\widehat{u}_0(\xi)_{kl}|+|\widehat{u}_1(\xi)_{kl}|\right).\]	
		Therefore,
		
		\begin{align}\label{eqnewww}\nonumber
			\|\partial_t(-\mathcal{L})^{1/2} u(t,\cdot)\|^2_{L^2(G)}&=\sum\limits_{[\xi]\in\mathcal{R}_1}d_\xi\sum\limits_{k,l=1}^{d_\xi}\lambda_\xi^2|\widehat{u}(t,\xi)_{kl}|^2+\sum\limits_{[\xi]\in\mathcal{R}_2}d_\xi\sum\limits_{k,l=1}^{d_\xi}\lambda_\xi^2|\widehat{u}(t,\xi)_{kl}|^2\\\nonumber&+\sum\limits_{[\xi]\in\mathcal{R}_3}d_\xi\sum\limits_{k,l=1}^{d_\xi}\lambda_\xi^2|\widehat{u}(t,\xi)_{kl}|^2+\sum\limits_{[\xi]\in\mathcal{R}_4}d_\xi\sum\limits_{k,l=1}^{d_\xi}\lambda_\xi^2|\widehat{u}(t,\xi)_{kl}|^2\\\nonumber
			&\lesssim e^{-2\delta_1t}\sum\limits_{[\xi]\in\mathcal{R}_2}d_\xi\sum\limits_{k,l=1}^{d_\xi}\left(|\widehat{u}_0(\xi)_{kl}|^2+|\widehat{u}_1(\xi)_{kl}|^2\right)\\\nonumber&+(1+t)^2e^{-2t}\sum\limits_{[\xi]\in\mathcal{R}_3}d_\xi\sum\limits_{k,l=1}^{d_\xi}\left(|\widehat{u}_0(\xi)_{kl}|^2+|\widehat{u}_1(\xi)_{kl}|^2\right)\\\nonumber&+e^{-2t}\sum\limits_{[\xi]\in\mathcal{R}_4}d_\xi\sum\limits_{k,l=1}^{d_\xi}\lambda_\xi^2\left(|\widehat{u}_0(\xi)_{kl}|^2+|\widehat{u}_1(\xi)_{kl}|^2\right)\\\nonumber&\lesssim (1+t)^2e^{-2\delta_1t}\left(\|u_0\|_{H^1_{\mathcal{L}}(G)}^2+\|u_1\|^2_{H^1_{\mathcal{L}}(G)}\right)\\&\lesssim (1+t)^{-3}\left(\|u_0\|_{H^1_{\mathcal{L}}(G)}^2+\|u_1\|^2_{H^1_{\mathcal{L}}(G)}\right). 
		\end{align}
		Now we are in a position to   prove  Theorem \ref{thm1}. 
		\begin{proof}[Proof of Theorem \ref{thm1}]
			The proof  follows from  the       estimates  (\ref{eq2111}), (\ref{eqnew}), (\ref{eqneww}), and  (\ref{eqnewww})  for $\|u(t, \cdot )\|_{L^{2}(G)}$, $\left\|(-\mathcal{L})^{1 / 2} u(t, \cdot )\right\|_{L^{2}(G)}$, 	$\left\|\partial_{t} u(t, \cdot )\right\|_{L^{2}(G)}$, and $	\|\partial_t(-\mathcal{L})^{1/2} u(t,\cdot)\|_{L^2(G)}$,  respectively.
		\end{proof}
		
		\section{$L^1( G)-L^2( G)$ estimates for the solution to the homogeneous problem}\label{sec5}
		In this section we  show that        there is no  improvement  of any decay rate for the norm $\|u(t,\cdot)\|_{L^2(G)}$  when further  we assume $L^1(G)$-regularity for $u_0$ and $u_1$. Note that in Theorem \ref{thm1} we employed data on $L^2( G)$ basis. Since $G$ is compact group it follows that the Haar measure of $G$ is finite. This implies that $L^2(G)$ is continuously embedded in $L^1(G)$ and therefore, one might be curious to know that  which changes will occur   if we further  implement $L^1(G)$-regularity for $u_0$ and $u_1.$

		From (\ref{eq16}), (\ref{eq18}), and (\ref{eq20}) it immediately follows that
		\begin{align*}
			\sum\limits_{[\xi]\in\widehat G\backslash \mathcal{R}_1}d_\xi\sum\limits_{k,l=1}^{d_{\xi}}|\widehat{u}(t,\xi)_{kl}|^2&\lesssim (1+t)^2e^{-2\delta_1t}\sum\limits_{[\xi]\in\widehat G\backslash\mathcal{R}_1}d_\xi\sum\limits_{k,l=1}^{d_{\xi}}\left(|\widehat{u}_0(\xi)_{kl}|^2+|\widehat{u}_1(\xi)_{kl}|^2\right)\\&\lesssim (1+t)^2e^{-2\delta_1t}\left(\|\widehat u_0(\xi)_{kl}\|_{L^2(G)}^2+\|\widehat u_1(\xi)_{kl}\|_{L^2(G)}^2\right)
		\end{align*}
		for some suitable constant $\delta_1.$ Therefore, the contribution to the sum in (\ref{eq12}) corresponding to  $\mathcal R_1$ refrain us to get a decay rate for $\|u(t,\cdot)\|^2_{L^2(G)}.$ Thus, if we want to employ $L^1(G)$-regularity rather than $L^2(G)$-regularity, then we must apply it to obtain the estimation of the terms with $[\xi]\in\mathcal R_1.$ Here, we must note that the set $\mathcal R_1$ is a singleton.
		
		Note that for the multiplier in (\ref{eq9}), 	  the best estimate that one can obtain on the set $\mathcal R_1$  are 
		\begin{align*}
			\mid\mathcal{K}_0(t,\xi)\mid,~\mid\mathcal{K}_1(t,\xi)\mid\lesssim 1.
		\end{align*}
		Since the set $\mathcal R_1$ is singleton, using the definition defined in (\ref{eql}),  we obtain
		\begin{align*}
			\sum\limits_{[\xi]\in\mathcal{R}_1}d_\xi\sum\limits_{k,l=1}^{d_{\xi}}|\widehat{u}(t,\xi)_{kl}|^2&\lesssim d_\xi\sum\limits_{k,l=1}^{d_{\xi}}\left(|\widehat{u}_0(\xi)_{kl}|^2+|\widehat{u}_1(\xi)_{kl}|^2\right)\\&\lesssim d_\xi\left(\|\widehat{u}_0(\xi)\|^2_{\operatorname{HS}}+\|\widehat{u}_1(\xi)\|^2_{\operatorname{HS}}\right)\\&\lesssim  \left(\sup\limits_{[\xi]\in\widehat G}d_\xi^{-\frac{1}{2}}\left(\|\widehat{u}_0(\xi)\|_{\operatorname{HS}}+\|\widehat{u}_1(\xi)\|_{\operatorname{HS}}\right)\right)^2\\&\lesssim\left(\|\widehat{u}_0(\xi)_{kl}\|_{\ell^\infty(\widehat G)}+\|\widehat{u}_1(\xi)\|_{\ell^\infty(\widehat G)}\right)^2\\&\lesssim \left(\|\widehat u_0(\xi)_{kl}\|_{L^1(G)}^2+\|\widehat u_1(\xi)_{kl}\|_{L^1(G)}^2\right).
		\end{align*}
		This shows that even if we use $L^1(G)$-regularity we are not able to get any decay rate for the norm $\|u(t,\cdot)\|_{L^2(G)}.$ 
		
		One can easily observe that the main reason behind this behaviour is that we can not neglect the eigenvalue $0$ as the Plancherel measure on a compact Lie group turns out to be a weighted counting measure.

		\begin{rmk}
			In the noncompact setting such as the Euclidean space and the Heisenberg group, one can get a global existence result for a non empty range for $p$ by asking an additional $L^{1}$-regularity for the initial data. Consequently, we get an improved  decay rates for the estimates of the $L^{2}$-norm of the solution to the corresponding linear homogeneous problem. One can see \cite{27,32} for the illustration and discussion on this matter.
			
		\end{rmk} 
		\section{Local existence}\label{sec6}
		This section is devoted to  prove  Theorem \ref{thm22}, i.e.,   the local well-posedness of the   Cauchy problem   (\ref{eq0010})  in the energy evolution space  $ \mathcal{C}^1\left([0, T], H_{\mathcal{L}}^{1}(G)\right)$. To present the proof of Theorem \ref{thm22}, first we recall    the notion of mild solutions  in our setting. 
		
		Consider the space \[X(T):=\mathcal{C}^1 \left([0,T],  H^1_{\mathcal L}(G)\right) ,\] equipped with the norm
		\begin{align}\label{eq33333}\nonumber
			\|u\|_{X(T)}&:=\sup\limits_{t\in[0,T]}(\|u(t,\cdot)\|_{L^2(G)}+\|(-\mathcal L)^{1/2}u(t,\cdot)\|_{L^2(G)}+\|\partial_tu(t,\cdot)\|_{L^2(G)}\\&\qquad +\|\partial_t(-\mathcal L)^{1/2}u(t,\cdot)\|_{L^2(G)}).
		\end{align}
		The solution to the nonlinear inhomogeneous problem
		\begin{align}\label{eq3111}
			\begin{cases}
				\partial^2_tu-\mathcal{L}u+\partial_tu-\mathcal{L}\partial_tu=F(t, x), & x\in G,t>0,\\
				u(0,x)=  u_0(x),  & x\in G,\\ \partial_tu(0, x)=  u_1(x), & x\in G,
			\end{cases}
		\end{align}
		can be expressed, by using Duhamel’s principle, as
		$$ u(t, x):= u_{0}(x)*_{(x)}E_{0}(t, x)+u_{1}(x)*_{(x)}E_{1}(t, x) +\int_{0}^{t} F(s, x)*_{(x)} E_{1}(t-s, x) \;d s,  $$
		where $*_{(x)}$ denotes the convolution with respect to the $x$ variable, $E_{0}(t, x)$ and $E_{1}(t, x)$  are  the fundamental solutions to the homogeneous problem (\ref{eq3111}), i.e., when  $F=0$ with initial data $\left(u_{0}, u_{1}\right)=\left(\delta_{0}, 0\right)$ and $\left(u_{0}, u_{1}\right)=$ $\left(0, \delta_{0}\right)$, respectively. 
		For any left-invariant differential operator $L$ on the compact Lie group $ {G}$, we applied  the property  that $L\left(v*_{(x)} E_{1}(t, \cdot)\right)=v *_{(x)} L\left(E_{1}(t, \cdot)\right)$ and   the invariance by time translations for the viscoelastic wave operator $	\partial^2_t-\mathcal{L}+\partial_t-\mathcal{L}\partial_t$ 
		in order to get the previous representation formula.
		
		\begin{defn}
			The function  $u$ is  said to be a mild solution to (\ref{eq3111})  on $[0, T]$ if $u$ is a fixed point for  the       integral operator  $N: u \in X(T) \rightarrow N u(t, x) $ defined as 
			\begin{align}\label{f2} 
				N u(t, x)= \varepsilon u_{0}(x) *_{(x)}  E_{0}(t, x)+\varepsilon u_{1}(x) *_{(x)}  E_{1}(t, x) +\int_{0}^{t}|u(s, x)|^{p} *_{(x)}  E_{1}(t-s, x) \;ds
			\end{align}
			in the evolution space $  \mathcal{C}^1 \left([0, T], H_{\mathcal{L}}^{1}(G)\right)  $, equipped with the norm defined in  (\ref{eq33333}). 
		\end{defn}

		As usual, the proof of the fact that, the map $N$ admits a uniquely determined fixed point for sufficiently small $T=T(\varepsilon),$  is based on Banach's fixed point theorem with respect to the norm on $X(T)$ as defined above.   More preciously, for $\left\|\left(u_{0}, u_{1}\right)\right\|_{H_{\mathcal{L}}^{1}(G) \times H_{\mathcal{L}}^{1}(G) }$   small enough,  if we can show the validity of the following two  inequalities
		$$\|N u\|_{X(T)} \leq C\left\|\left(u_{0}, u_{1}\right)\right\|_{H_{\mathcal{L}}^{1}(G) \times H_{\mathcal{L}}^{1}(G) }+C\|u\|_{X(T)}^{p},$$
		$$\|N u-N v\|_{X(T)} \leq C\|u-v\|_{X(T)}\left(\|u\|_{X(T)}^{p-1}+\|v\|_{X(T)}^{p-1}\right),$$
		for any $u, v \in X(T)$ and for some  suitable constant $C>0$ independent of $T$. Then by Banach's fixed point theorem  we can assure  that the operator $N$ admits a unique fixed point $u$.  This     function $u$ will be  the mild solution to (\ref{eq3111})  on $[0, T]$.  
		
		\vspace{2mm}
		
		In order to prove the local existence result, an important tool is the following Gagliardo-Nirenberg type inequality which can be derived from the general version of this inequality given in   \cite{Gall}. We also refer \cite{Gall} for the detailed proof of this inequality for more general connected unimodular Lie groups. 
		
		\begin{lem} \label{lemma1}
			Let $G$ be a (connected) compact Lie group with topological dimension $n\geq3.$ Assume that  $q\ge2$ such that $q\leq\frac{2n}{n-2}$. Then the following Gagliardo-Nirenberg type inequality holds
			\begin{align}\label{eq34}
				\|f\|_{L^q(G)}\lesssim \|f\|^{\theta(n, q)}_{H^{1}_\mathcal L(G)}\|f\|^{1-\theta(n, q)}_{L^{2}(G)}
			\end{align} 
			for all $f\in H^{1}_\mathcal L(G)$, where $\theta(n, q)=n\left(\frac{1}{2}-\frac{1}{q} \right) $.
		\end{lem}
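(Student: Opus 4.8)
The plan is to obtain \eqref{eq34} as a specialization of the general Gagliardo--Nirenberg inequality available on connected unimodular Lie groups (hence on the compact group $G$) recorded in \cite{Gall}, reducing the statement to two ingredients: a Sobolev embedding at the critical exponent and an elementary interpolation in the Lebesgue scale. First I would fix the critical Sobolev exponent $2^{*}:=\frac{2n}{n-2}$, which is finite and satisfies $2^{*}>2$ precisely because $n\geq 3$; this is where the dimensional restriction enters. The two endpoints of the claimed range are then transparent: for $q=2$ the inequality is trivial, since both sides equal $\|f\|_{L^{2}(G)}$ and $\theta(n,2)=0$, while for $q=2^{*}$ it reduces to the Sobolev embedding $H^{1}_{\mathcal L}(G)\hookrightarrow L^{2^{*}}(G)$, corresponding to $\theta(n,2^{*})=n\left(\frac12-\frac{n-2}{2n}\right)=1$.

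For intermediate $q\in(2,2^{*})$ the idea is to interpolate. Writing $\frac{1}{q}=\frac{1-\alpha}{2}+\frac{\alpha}{2^{*}}$ and applying the standard H\"older interpolation inequality on the Lebesgue scale gives $\|f\|_{L^{q}(G)}\leq \|f\|_{L^{2}(G)}^{1-\alpha}\,\|f\|_{L^{2^{*}}(G)}^{\alpha}$. A direct computation using $\frac{1}{2^{*}}=\frac{n-2}{2n}$ shows $\frac1q=\frac12-\frac{\alpha}{n}$, hence $\alpha=n\left(\frac12-\frac1q\right)=\theta(n,q)$, which is exactly the exponent appearing in \eqref{eq34}; the hypothesis $2\leq q\leq 2^{*}$ is what guarantees $\alpha\in[0,1]$, so that the interpolation is legitimate. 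Combining this with the endpoint embedding $\|f\|_{L^{2^{*}}(G)}\lesssim \|f\|_{H^{1}_{\mathcal L}(G)}$ then yields $\|f\|_{L^{q}(G)}\lesssim \|f\|_{H^{1}_{\mathcal L}(G)}^{\theta(n,q)}\,\|f\|_{L^{2}(G)}^{1-\theta(n,q)}$, as desired. Note that passing from the homogeneous seminorm $\|(-\mathcal L)^{1/2}f\|_{L^{2}(G)}$ to the full norm $\|f\|_{H^{1}_{\mathcal L}(G)}=\|f\|_{L^{2}(G)}+\|(-\mathcal L)^{1/2}f\|_{L^{2}(G)}$ only weakens the bound, which is harmless and is the natural inhomogeneous form on a compact group carrying no dilation structure.

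The only substantial point, and the step I expect to be the main obstacle, is the critical Sobolev embedding $H^{1}_{\mathcal L}(G)\hookrightarrow L^{2^{*}}(G)$ itself; everything else is bookkeeping with exponents. On a compact Lie group this can be established in several ways: through Gaussian heat-kernel bounds for $e^{t\mathcal L}$ together with the associated Nash/Sobolev inequality, or by transplanting the Euclidean Sobolev inequality locally via a finite chart cover and a partition of unity, using compactness of $G$ to control the gluing. Since $\|f\|_{H^{1}_{\mathcal L}(G)}$ controls a full first-order Sobolev norm, any of these routes applies. In practice I would simply invoke the general Gagliardo--Nirenberg inequality of \cite{Gall} with the parameter choice $s=1$ and base exponents equal to $2$, and verify that its exponent compatibility relation is met by $\alpha=\theta(n,q)$, thereby subsuming both the embedding and the interpolation into a single citation.
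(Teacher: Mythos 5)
Your argument is correct. Note, however, that the paper does not actually prove Lemma \ref{lemma1}: it is stated as a tool and justified solely by citing \cite{Gall}, exactly as your closing sentence proposes to do. What you add beyond the paper is a self-contained derivation, and that derivation is sound: the identity $\frac1q=\frac{1-\alpha}{2}+\frac{\alpha}{2^*}$ with $2^*=\frac{2n}{n-2}$ does give $\alpha=n\bigl(\frac12-\frac1q\bigr)=\theta(n,q)$, the hypothesis $2\le q\le 2^*$ is precisely what places $\alpha\in[0,1]$ so the H\"older interpolation $\|f\|_{L^q(G)}\le\|f\|_{L^2(G)}^{1-\alpha}\|f\|_{L^{2^*}(G)}^{\alpha}$ applies, and the remaining ingredient is the critical Sobolev embedding $H^1_{\mathcal L}(G)\hookrightarrow L^{2^*}(G)$, which on a compact Lie group of dimension $n\ge 3$ is standard (by local charts and a partition of unity, or by heat-kernel methods, as you indicate). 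Passing from the homogeneous seminorm to the full inhomogeneous norm indeed only weakens the inequality, which is the appropriate form on a compact group with no dilation structure. In short, your route buys an elementary, checkable proof in the compact setting, whereas the paper's citation to \cite{Gall} buys generality (connected unimodular Lie groups) at the cost of opacity; both are legitimate, and your exponent bookkeeping matches the lemma exactly, including the endpoint cases $q=2$ ($\theta=0$, trivial) and $q=2^*$ ($\theta=1$, the Sobolev embedding).
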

		One can also consult \cite{Gall, 27}  for several immediate important  remarks.

		\begin{proof}[Proof of Theorem \ref{thm22}]
			The  expression    (\ref{f2})  can be wriiten as  $N u=u^\sharp+I[u]$, where 
			\begin{align*}
				u^\sharp(t,x)=\varepsilon u_{0}(x) *_{(x)}  E_{0}(t, x)+\varepsilon u_{1}(x) *_{(x)}  E_{1}(t, x)
			\end{align*}
			and 
			\begin{align*}
				I[u](t,x):=\int\limits_0^t |u(s,x)|^p*_x E_1(t-s, x)ds.
			\end{align*} 
			
			Now for the part $u^\sharp$,	from  Theorem \ref{thm1},   immediately  it follows that
			\begin{align}\label{f3}
				\|u^\sharp\|_{X(T)}\lesssim\varepsilon\|(u_0,u_1)\|_{{H}_{\mathcal L}^1 (G)\times{H}_{\mathcal L}^1 (G)}.
			\end{align}
			On the other hand, for the part $I[u]$, using Minkowski's integral inequality, Young's convolution inequality, Gagliardo-Nirenberg type ineuqality \eqref{eq34}, Theorem \ref{thm1},  and  by time translation invariance property of the Cauchy problem (\ref{eq0010}), we get  
			\begin{align}\label{f}\nonumber
				\|\partial_t^j(-\mathcal L)^{i/2}I[u]\|_{L^2(G)}&=\left(\int_{G}	\big |\partial_t^j(-\mathcal L)^{i /2} \int\limits_0^t |u(s,x)|^p*_x E_1(t-s, x)ds\big |^2 dg\right)^{\frac{1}{2}}\\\nonumber
				&=\left(\int_{G}\big	|  \int\limits_0^t |u(s,x)|^p*_x \partial_t^j(-\mathcal L)^{i /2}E_1(t-s, x)ds\big|^2 dg\right)^{\frac{1}{2}}	\\\nonumber
				&\lesssim  \int\limits_0^t \| |u(s,\cdot )|^p*_x \partial_t^j(-\mathcal L)^{i /2}E_1(t-s, \cdot)\|_{L^2(G)}ds\\\nonumber
				&\lesssim  \int\limits_0^t \| u(s,\cdot)^p\|_{L^2(G)} \|\partial_t^j(-\mathcal L)^{i /2}E_1(t-s, \cdot)\|_{L^2(G)}ds\\\nonumber
				&\lesssim \int\limits_0^t  (1+t-s)^{-j-\frac{i}{2}} \|u(s,\cdot)\|^p_{L^{2p}(G)}ds\\\nonumber
				&\lesssim\int\limits_0^t  \|u(s,\cdot)\|^{p\theta(n,2p,  )}_{H^1_{\mathcal{L}}(G)}\|u(s,\cdot)\|^{p(1-\theta(n,2p ))}_{L^2(G)}ds \\ 
				&\lesssim t   \|u\|^p_{X(t)},
			\end{align} 
			for all $(i,j) \in\{(0, 0), (1, 0), (0,1), (1, 1)\}$.  Again for $(i,j) \in\{(0, 0), (1, 0), (0,1), (1, 1)\}$,  a similar calculations as in  (\ref{f}) togeter with Holder's inequality,  we get 
			\begin{align}\label{f5}\nonumber
				& 	\|\partial_t^j(-\mathcal L)^{i /2}\left(I[u]-I[v]\right)\|_{L^2(G)}\\\nonumber&\lesssim \int\limits_0^t (1+t-s)^{-j-\frac{i}{2}} \|u(s,\cdot)|^p-|v(s,\cdot)|^p\|_{L^{2}(G)}ds\\\nonumber
				&\lesssim\int\limits_0^t   \|u(s,\cdot)-v(s,\cdot)\|_{L^{2p}(G)}\left(\|u(s,\cdot)\|^{p-1}_{L^{2p}(G)}+\|v(s,\cdot)\|^{p-1}_{L^{2p}(G)}\right)ds\\ 
				&\lesssim t  \|u-v\|_{X(t)}\left(\|u\|^{p-1}_{X(t)}-\|v\|^{p-1}_{X(t)}\right).
			\end{align} 
			Thus 	combining  (\ref{f3}),   (\ref{f}), and (\ref{f5}), we have
			\begin{align}\label{1}
				\|N u\|_{X(t)} \leq D \varepsilon\left\|\left(u_{0}, u_{1}\right)\right\|_{H_{\mathcal{L}}^{1 }(G) \times H_{\mathcal{L}}^{1 }(G)}+DT\|u\|_{X(t)}^{p} 
			\end{align} 
			and 
			\begin{align}\label{2} \|Nu-Nv\|_{X(T)}\leq  DT \|u-v\|_{X(t)}\left(\|u\|^{p-1}_{X(T)}-\|v\|^{p-1}_{X(T)}\right).\end{align} 
			
			Thus for   sufficiently small   $T$,  the map $N$ turns out to be a contraction in some neighbourhood of $0$ in the Banach space $X(T).$ Therefore, it follows from the Banach's fixed point theorem that there exists a uniquely determined fixed point $u$ of the map $N.$ This fixed point $u$ is  the mild solution to the system (\ref{eq0010}) on $[0, t] \subset[0, T]$.    
		\end{proof}

		\section*{Acknowledgement}
		Arun Kumar Bhardwaj  thanks  IIT Guwahati for providing financial support.  He also thanks  his supervisor  Rajesh Srivastava  for his support and encouragement. Vishvesh Kumar is supported  by the FWO  Odysseus  1  grant  G.0H94.18N:  Analysis  and  Partial Differential Equations and  Partial Differential Equations and by the Methusalem programme of the Ghent University Special Research Fund (BOF) 	(Grant number 01M01021).   Shyam Swarup Mondal thanks IIT Delhi  for providing financial support. 
		\section{Data availability statement}
		The authors confirm that the data supporting the findings of this study are available within the article  and its supplementary materials.

	\end{document}